\def\omathop#1#2#3{\let\temp=#1\def\letter{#2}
\ifcat#3_ \let\next\@@olim\else\let\next\@olim\fi\next#3}
\def\@olim{\letter\text{-}\!\temp}
\def\@@olim_#1{\mathchoice{
\setbox0=\hbox{$\displaystyle\letter\text{-}\!\temp\!\text{-}\letter$}
\setbox2=\hbox{$\displaystyle\temp$}
\setbox4=\hbox{$\scriptstyle#1$}
\dimen@=\wd4 \advance\dimen@ by -\wd2 \divide\dimen@ by2
\def\next{\letter\text{-}\!\temp_{\hbox to 0pt{\hss$\scriptstyle#1$\hss}}
\hskip\dimen@}
\ifdim\wd2>\wd4 \def\next{\@olim_{#1}}\fi
\ifdim\wd4>\wd0 \def\next{\mathop{\llap{$\letter$-}\!\temp}\limits_{#1}}\fi
\next}
{\@olim_{#1}}{\@olim_{#1}}{\@olim_{#1}}}
\newcommand{\be}{\begin{equation}}
\newcommand{\ee}{\end{equation}}
\def\phi{\varphi}
\newcommand{\bi}{\begin{itemize}}
\newcommand{\ei}{\end{itemize}}
\newcommand{\bn}{\begin{enumerate}}
\newcommand{\en}{\end{enumerate}}
\newcommand{\hide}[1]{} 
\newtheorem{theorem}{Theorem}[section]
\newtheorem{proposition}[theorem]{Proposition}
\newtheorem{corollary}[theorem]{Corollary}
\newtheorem{lemma}[theorem]{Lemma}
\newtheorem{remark}[theorem]{Remark}
\theoremstyle{example}
\theoremstyle{definition}
\newtheorem{definition}[theorem]{Definition}
\numberwithin{equation}{section}
\begin{document}

\title[Frattinian nilpotent Lie algebras ]{Frattinian nilpotent Lie algebras }

\author[M. KianMehr \& F. Saeedi ]
{Mehri KianMehr$^1$ , and Farshid Saeedi$^2$}  

\subjclass[2010]{Primary 17B30; secondary 17B99.}

\keywords{Frattinian Lie algebra, maximal subalgebra, central product }

\begin{abstract}
We introduce a novel concept Frattinian nilpotent Lie algebra. Along with some examples, we show that every Frattinian nilpotent Lie algebra has a central
decomposition of its ideals.
\end{abstract}

\maketitle

\section{Introduction}
Deeper studies in nilpotent Lie algebras are associated to special classes. One of these classes is Heisenberg Lie algebras belonging to finite-dimensional nilpotent Lie algebra, and many recent articles are done in this topic. Some generalizations of Heisenberg Lie algebras were studied in \cite{1,2,3,4,5,6,7,8,9,13}.

In the present paper, we introduce a class of finite-dimensional nilpotent Lie algebra, which has similar properties to Heisenberg Lie algebras. In group theory, this concept is Frattinian $ p $-groups,  called by Schmid \cite{10}, which has become the basis of much research in this field. Next section is devoted to introducing Frattinian nilpotent Lie algebras and illustrating some examples.
Then we state and prove some lemmas, which are essential for the main result. In the last section, we state and prove the main result. 
\section{Definitions and preliminary results}
Let $L$ be a finite-dimensional nilpotent Lie algebra. We show the ith term of  lowerand upper central series and Frattinian subalgebra of $L$ by $L^i$, $Z_i(L)$, and $\Phi(L)$, respectively. We recall that the Lie algebra $L$ is Heisenberg if $L^2=Z(L)$ and $dim\ L^2=1$. These algebras are dimension odd with the basis $x,x_1,\ldots,x_{2m}$ associated with the nontrivial multiplication $[x_{2i-1},x_{2i}]=x$ for all $i=1,\ldots, m$. The abelian Lie algebra of dimension $n$ and Heisenberg Lie algebra of dimension $2m + 1$ are denoted by $A(n)$ and $H(m)$, respectively.

Now we define the Frattinian nilpotent Lie algebra.
\begin{definition}
The finite-dimensional nilpotent Lie algebra $L$ is  called Frattinian if for every maximal subalgebra $M$ of $L$, we have $Z(L)\neq Z(M)$.
\end{definition}
It is well known that every finite-dimensional abelian Lie algebra is Frattinian.
Now we state some examples of nonabelian  Frattinian Lie algebras. First we recall the central product of Lie algebras from \cite{11}.
\begin{definition}
The Lie algebra $L$ is a central product of $A$ and $B$ if $L=A+B$ in which $A$ and $B$ are ideals of $L$ such that $[A,B]=0$ and $A\cap B=Z(L)$, which we show it by  $L=A \dotplus B$. 
\end{definition}
If  $L=A \dotplus B$, then $Z(A)\cap Z(B)=A\cap B$   and  $Z(L)=Z(A)+Z(B)$.

The following lemma shows that every Heisenberg Lie algebra is a central product of some of its ideals.

\begin{lemma}\cite[Lemma 3.3]{12}\label{l1}
Let $L$ be a Heisenberg Lie algebra of dimension $2m+1$. Then $L$ is a central product of ideals $B_j$, $1\leq j \leq m$, such that $B_j$ is a Heisenberg Lie algebra of dimension three.
\end{lemma}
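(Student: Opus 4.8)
The plan is to produce the ideals explicitly from the given basis and then check each clause of the central-product definition by direct computation. Writing the basis of $L$ as $x, x_1, \ldots, x_{2m}$ with the only nontrivial brackets $[x_{2i-1}, x_{2i}] = x$, I would set
\[
B_j = \langle x, x_{2j-1}, x_{2j}\rangle, \qquad 1\le j\le m.
\]
Since $x$ spans $Z(L)=L^2$ and is therefore central, each $B_j$ is closed under bracketing with all of $L$: the odd-indexed generator $x_{2j-1}$ brackets nontrivially only with its partner $x_{2j}$, and $x_{2j}$ only with $x_{2j-1}$, each such bracket landing in $\langle x\rangle\subseteq B_j$. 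Hence $[B_j,L]\subseteq B_j$, so every $B_j$ is an ideal, and each is three-dimensional with the single relation $[x_{2j-1},x_{2j}]=x$, i.e.\ a copy of $H(1)$.

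Next I would verify the defining conditions of the (iterated) central product. That $L=B_1+\cdots+B_m$ is immediate, since the $B_j$ together contain every basis vector $x, x_1,\ldots,x_{2m}$. For $i\neq j$ the bracket $[B_i,B_j]$ vanishes, because brackets between basis vectors drawn from distinct index pairs $\{2i-1,2i\}$ and $\{2j-1,2j\}$ are all zero and $x$ is central; thus distinct factors commute. Finally, for $i\neq j$ the generators $x_{2i-1},x_{2i}$ and $x_{2j-1},x_{2j}$ are disjoint, so $B_i\cap B_j=\langle x\rangle = Z(L)$, which is exactly the intersection condition required of a central product.

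The only genuine subtlety is bookkeeping: the definition of central product in the text is phrased for two factors, whereas here $m$ factors occur. I would address this by reading $B_1 \dotplus \cdots \dotplus B_m$ as the iterated central product and noting that the $m$-fold conditions just verified ($L=\sum_j B_j$, each $B_j$ an ideal, $[B_i,B_j]=0$ for $i\neq j$, and $B_i\cap B_j=Z(L)$) are precisely what an induction on $m$ delivers from the two-factor definition: at each step one splits off $B_j$ against the ideal generated by the remaining factors, whose intersection with $B_j$ reduces to $\langle x\rangle$. No individual step requires more than the bracket table, so once the notation is fixed the argument is routine.
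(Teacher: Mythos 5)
Your proof is correct. Note that the paper itself gives no argument for this lemma --- it is quoted from \cite[Lemma 3.3]{12} --- so there is no internal proof to compare against; your explicit construction $B_j=\langle x,x_{2j-1},x_{2j}\rangle$, together with the verification that each $B_j$ is a three-dimensional Heisenberg ideal, that $L=\sum_j B_j$, that $[B_i,B_j]=0$ for $i\neq j$, and that $B_i\cap B_j=\langle x\rangle=Z(L)$, is the standard and essentially the only natural way to prove it. Your remark about reducing the $m$-fold statement to the two-factor definition is the one point worth being explicit about, and your resolution is sound: at each stage $B_m\cap(B_1+\cdots+B_{m-1})=\langle x\rangle=Z(L)$ because the generating pairs are disjoint, and $B_1+\cdots+B_{m-1}$ is again a Heisenberg algebra of dimension $2(m-1)+1$ with the same one-dimensional center, so the induction closes. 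No gaps.
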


In the following proposition, we give an example of nonabelian Frattinian  Lie algebras.
\begin{proposition}
Every Heisenberg Lie algebra is a Frattinian  Lie algebra.
\end{proposition}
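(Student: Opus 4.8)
The plan is to exploit the symplectic structure that the bracket induces on the quotient $L/Z(L)$. Recall that for a Heisenberg algebra $L=H(m)$ of dimension $2m+1$ we have $Z(L)=L^2=\langle x\rangle$ of dimension $1$, and the bracket descends to a nondegenerate alternating bilinear form $\omega$ on the $2m$-dimensional quotient $V:=L/Z(L)$, defined by $[a,b]=\omega(\bar a,\bar b)\,x$, where $\bar a$ denotes the image of $a$ in $V$. Nondegeneracy of $\omega$ is exactly the statement $Z(L)=\langle x\rangle$.

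First I would reduce the maximal subalgebras to hyperplanes of $V$. Since $L$ is nilpotent, every maximal subalgebra $M$ satisfies $M\subsetneq N_L(M)$, forcing $N_L(M)=L$, so $M$ is an ideal; then $L/M$ has no proper nonzero subalgebra and is therefore one-dimensional, whence $M$ has codimension $1$ and $L^2\subseteq M$. In particular $Z(L)=\langle x\rangle\subseteq M$, so $x\in Z(M)$ and already $Z(L)\subseteq Z(M)$. It remains to produce an element of $Z(M)$ outside $Z(L)$. Writing $W:=M/Z(L)$, this is a hyperplane of $V$, of dimension $2m-1$.

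The heart of the argument is a symplectic fact: the $\omega$-orthogonal complement $W^{\perp}$ of a hyperplane $W$ is a one-dimensional isotropic line contained in $W$. Indeed, $W=\ker\phi$ for a nonzero functional $\phi$, and by nondegeneracy $\phi=\omega(u,-)$ for a unique $u\neq 0$; then $W^{\perp}=\langle u\rangle$, and since $\omega(u,u)=0$ we get $u\in W$, i.e. $W^{\perp}\subseteq W$. Now an element $a\in M$ lies in $Z(M)$ precisely when $\omega(\bar a,\bar b)=0$ for all $b\in M$, i.e. when $\bar a\in W^{\perp}$. Choosing $a\in M$ with $\bar a=u$ yields $a\in Z(M)$, while $\bar a=u\neq 0$ in $V$ gives $a\notin Z(L)$. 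Hence $Z(M)\supsetneq Z(L)$, and in particular $Z(M)\neq Z(L)$ for every maximal $M$, which is exactly the Frattinian property.

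I expect the only delicate point to be the standard structural input that maximal subalgebras of a nilpotent Lie algebra are codimension-one ideals containing $L^2$; everything after that is the short linear-algebra computation with the symplectic form. An alternative route would be to invoke Lemma \ref{l1} and argue via the central product decomposition into copies of $H(1)$, but the symplectic argument is self-contained and handles all $m\geq 1$ uniformly, so that is the line I would pursue.
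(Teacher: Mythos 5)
Your argument is correct, and it takes a genuinely different route from the paper. The paper's proof invokes Lemma \ref{l1} to decompose $L=B_1\dotplus\cdots\dotplus B_m$ into three-dimensional Heisenberg ideals, picks a $B_k$ not contained in the maximal subalgebra $M$, and shows by a dimension count that the two-dimensional abelian algebra $M\cap B_k$ sits inside $Z(M)$, forcing $Z(M)\neq Z(L)$. You instead work with the nondegenerate alternating form $\omega$ that the bracket induces on $V=L/Z(L)$, identify $M/Z(L)$ with a hyperplane $W$, and observe that $W^{\perp}$ is a line contained in $W$ whose preimage gives a noncentral element of $Z(M)$. Your preliminary reduction (maximal subalgebras of a nilpotent Lie algebra are codimension-one ideals containing $L^2$, via the normalizer condition) is the same structural fact the paper uses implicitly when it asserts $\dim M=2m$ and $Z(L)\subseteq M$. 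What your approach buys is self-containedness --- no appeal to the central product decomposition of \cite{12} --- and slightly more information: it exhibits $Z(M)$ exactly as the preimage of $W^{\perp}$, hence two-dimensional, rather than merely different from $Z(L)$. What the paper's approach buys is consistency with the rest of the article, whose main theorem is precisely about central product decompositions, so Lemma \ref{l1} is a natural tool to showcase there. Both proofs are valid for all $m\geq 1$.
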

\begin{proof}
Let $L$ be a Heisenberg Lie algebra of dimension $2m+1$ and let $M$ be a maximal subalgebra of $L$. Since $dim\ Z(L)=1$, so $dim\ M \cap Z(L)=1$ and $ Z(L) \subseteq M $. Using Lemma \ref{l1}, we have $ L=B_1 \dotplus B_2 \dotplus \cdots \dotplus B_m $, where  $ B_j\simeq H(1) $ for $ 1 \leqslant j  \leqslant m $.

Since $M$ is a maximal subalgebra of $L$, there exists $ 1 \leqslant k  \leqslant m $ such that $ B_k \nleqslant M $, which implies that  $ L=B_k + M $. Moreover,  since $ dim\ M= 2m $, so $ dim\ (M \cap B_k )=2 $. Thus  $ M\cap B_k $ is an abelian Lie algebra contained $ Z(M) $. Therefore, $Z(L)\neq Z(M)$, since $dim\ Z(L)=1$.
\end{proof}
 Now we give an example of nilpotent Lie algebra that is not a Frattinian  Lie algebra. First, we recall that  $L$ Lie of algebra of dimension  $n$ is said to be filiform if $ dim\ \frac{L}{L^2}=2 $ and  $ dim\ \frac{L^i}{L^{i+1}}=1 $ for all $ i=2,3, \ldots , n-1 $. In fact, every $n$-dimensional filiform Lie algebra $L$ is a Nilpotent Lie algebra with nilpotency class $n-1$ and   $Z_i (L)=L^{n-i}$ for all $ i \geqslant 1 $. We know that $H(1)$ is a Heisenberg and filiform Lie algebra, as well as a Frattinian  Lie algebra.

 \begin{proposition}
 Let $L$ be a filiform Lie algebra of dimension greater than or equal  to four. Then $L$ is not a Frattinian  Lie algebra. 
 \end{proposition}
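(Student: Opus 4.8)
The plan is to disprove the Frattinian property by exhibiting a single maximal subalgebra $M$ of $L$ whose centre coincides with $Z(L)$; by the definition of Frattinian, one such $M$ already shows that $L$ is not Frattinian. To set up, write $n = \dim L \ge 4$ and fix an adapted basis of the filiform algebra: the defining conditions $\dim L/L^2 = 2$ and $\dim L^i/L^{i+1} = 1$ for $2 \le i \le n-1$ let one choose a basis $e_1, e_2, \ldots, e_n$ with $L^i = \langle e_{i+1}, \ldots, e_n\rangle$ and $[e_1, e_i] = e_{i+1}$ for $2 \le i \le n-1$ (and $[e_1, e_n] = 0$). This is the standard normal form for filiform Lie algebras; the remaining brackets $[e_i, e_j]$ with $i, j \ge 2$ land in $L^{i+j-2} \subseteq L^2$ but will play no role below. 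From the given identity $Z_i(L) = L^{n-i}$ with $i = 1$ we obtain $Z(L) = L^{n-1} = \langle e_n\rangle$, so in particular $\dim Z(L) = 1$.

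Next I would define $M = \langle e_1, e_3, e_4, \ldots, e_n\rangle = \langle e_1\rangle + L^2$ and verify maximality. Since $L^2 \subseteq M$ and $[M, L] \subseteq [L, L] = L^2 \subseteq M$, the subspace $M$ is automatically an ideal, hence a subalgebra; as it has codimension $1$ (it omits $e_2$), it is maximal.

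Then I would compute $Z(M)$. Take $z = a_1 e_1 + \sum_{k=3}^{n} a_k e_k \in M$. Bracketing with $e_1$ gives $[z, e_1] = -\sum_{k=3}^{n-1} a_k e_{k+1}$, so $[z, e_1] = 0$ forces $a_3 = \cdots = a_{n-1} = 0$ and reduces $z$ to $a_1 e_1 + a_n e_n$. Bracketing this with $e_3$ gives $[z, e_3] = a_1[e_1, e_3] + a_n[e_n, e_3] = a_1 e_4$, because $e_n \in Z(L)$ kills the second term. This is exactly where $n \ge 4$ is used: the basis vector $e_4$ exists and is nonzero, so $[z, e_3] = 0$ forces $a_1 = 0$. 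Hence $z \in \langle e_n\rangle$, and since $e_n \in Z(L) \subseteq Z(M)$ we conclude $Z(M) = \langle e_n\rangle = Z(L)$, so $L$ is not Frattinian.

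The only points needing care are twofold: justifying the adapted basis (a known but nontrivial normal-form theorem for filiform algebras, which I would cite), and isolating the role of $n \ge 4$. The case $n = 3$ is genuinely different, consistent with the earlier remark that $H(1)$ is filiform yet Frattinian: there $e_4$ does not exist, the step forcing $a_1 = 0$ collapses, and indeed $M = \langle e_1, e_3\rangle$ becomes abelian with $Z(M) = M \ne Z(L)$. I would also confirm that the extra filiform brackets $[e_i, e_j]$, $i, j \ge 3$, cannot enlarge $Z(M)$; this is immediate, since the centrality test was resolved using only $[z, e_1]$ and $[z, e_3]$, which involve none of those terms.
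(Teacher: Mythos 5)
Your proof is correct, but it takes a genuinely different route from the paper's. You exhibit one explicit maximal subalgebra $M=\langle e_1\rangle+L^2$ in an adapted (Vergne) basis and compute directly that $Z(M)=\langle e_n\rangle=Z(L)$; this is all the definition of Frattinian requires, and the computation is sound: the two brackets $[z,e_1]$ and $[z,e_3]$ do pin $Z(M)$ down to $\langle e_n\rangle$, and your isolation of where $n\geq 4$ enters (the existence of $e_4$) correctly explains why $H(1)$ escapes the conclusion. The paper instead argues structurally and basis-free: it first shows $L$ has at least two nonabelian maximal subalgebras (at most one maximal subalgebra can be abelian, by a central-product dimension count), proves that any nonabelian maximal subalgebra $M$ satisfies $Z(M)=L^m$ with $m=\dim L/Z(M)$ (combining $L^m\subseteq Z(M)$, which follows from the nilpotency class of $L/Z(M)$, with the filiform dimension count), and then concludes from $Z(M)\cap Z(N)=Z(L)$ and the nesting of the terms $L^i$ that one of the two centers already equals $Z(L)$. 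Your approach buys concreteness and a single explicit witness, at the price of invoking the adapted-basis normal form, which you rightly flag as needing a citation (and whose standard proof requires the ground field to be infinite); the paper's approach avoids any choice of basis and yields the extra structural information that the center of every nonabelian maximal subalgebra of a filiform algebra is a term of the lower central series.
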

 \begin{proof}
 Since $L$ is a nonabelian Lie algebra of dimension greater than or equal  to four, it has more than two maximal subalgebra. 

Moreover, $L$ at most has one abelian maximal subalgebra. For this, let $A$ and $B$ be two abelian maximal subalgebras of $L$. Then $L$ is the central product of $A$ and $B$ and 
\[\frac{L}{Z(L)}=\frac {L}{A\cap B}\simeq \frac{L}{A}\oplus\frac{L}{B}.\]
Since $dim\ Z( L)=1$, so $dim \ L=3$, which is a contradiction. 
  In other words, $L$ is a nonabelian Lie algebra that has at least two different maximal nonabelian subalgebras  $M$ and $N$. Let  $ dim\ \frac{L}{Z(M)} = m$, $ dim\ \frac{L}{Z(N)}=n $, and $dim\ L =l \geqslant 4$, where   $ m , n \in \lbrace 2 , 3 , \ldots , l-1 \rbrace $.
 
 Since  $ \dfrac{L}{Z(M)} $ is a nilpotent Lie algebra with nilpotency class at most $m-1$, we have  $ L^m \subseteq Z(M) $. The fact that $L$ is a filiform Lie algebra implies that
\[m = dim\ \frac{L}{L^m}= dim\ \frac{L}{Z(M)} + dim\ \frac{Z(M)}{L^m} = m + dim\ \frac{Z(M)}{L^m}. \]

Thus  $ Z(M)=L^m $. Similarly one can show that $Z(N)=L^n$. Without loss of generality we assume that $ m \leqslant n $. Since $ Z(M) \cap Z(N) =Z(L) $, so 
\[ Z(L) =Z(M) \cap Z(N) =L^m \cap L^n =L^n =Z(N), \]
which implies that $ L $ is not a Frattinian  Lie algebra.
 \end{proof}
 In the rest of this section, we state and prove some lemmas, which are essential in the proof of main theorem.

\begin{lemma}\label{l2}
Let $M$ be a maximal subalgebra of finite-dimensional nilpotent Lie algebra $L$.
Then  $  C_L(M)=Z(M)$  or $  C_L(M)=Z(L)$ is a supplement to $M$ in $L$, where  $C_L(M)$ is a  centralizer of  $M$  in  $L$.
\end{lemma}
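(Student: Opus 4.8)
The plan is to exploit the special structure that maximal subalgebras enjoy in the nilpotent setting and then to split the argument according to whether $C_L(M)$ is contained in $M$ or not. First I would record the structural facts about $M$: because $L$ is nilpotent it satisfies the normalizer condition, so the maximal subalgebra $M$ is in fact an ideal; the quotient $L/M$ is then a nonzero Lie algebra whose only subalgebras are $0$ and itself, which forces $\dim L/M=1$. Two elementary observations will be used repeatedly: since $Z(L)$ centralizes the whole of $L$ it centralizes $M$, whence $Z(L)\subseteq C_L(M)$; and directly from the definitions $C_L(M)\cap M=Z(M)$. A one-line Jacobi computation, using that $M$ is an ideal, also shows that $C_L(M)$ is an ideal of $L$, so that $M+C_L(M)$ is a subalgebra.

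Next I would dichotomize on the inclusion $C_L(M)\subseteq M$. If it holds, then $C_L(M)=C_L(M)\cap M=Z(M)$, which is the first alternative. If it fails, then $M+C_L(M)$ is a subalgebra strictly containing $M$, so maximality gives $M+C_L(M)=L$; that is, $C_L(M)$ is a supplement to $M$ in $L$.

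The remaining, and crucial, task is to identify $C_L(M)$ with $Z(L)$ in this second case. Here I would exploit a dimension count: from $C_L(M)+M=L$, $C_L(M)\cap M=Z(M)$, and $\dim L/M=1$ one obtains $\dim\bigl(C_L(M)/Z(M)\bigr)=1$, so $C_L(M)=Z(M)+\langle z\rangle$ for a single element $z$. Since $Z(M)$ is abelian and $z\in C_L(M)$ annihilates every element of $Z(M)\subseteq M$, the subalgebra $C_L(M)$ is abelian. Writing an arbitrary $y\in L$ as $y=m+c$ with $m\in M$ and $c\in C_L(M)$, every $x\in C_L(M)$ then satisfies $[x,y]=[x,m]+[x,c]=0$, so $x\in Z(L)$; combined with $Z(L)\subseteq C_L(M)$ this gives $C_L(M)=Z(L)$. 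I expect the main obstacle to lie precisely in this final step—proving that $C_L(M)$ is abelian and hence central—while the structural preliminaries about $M$ being a codimension-one ideal are routine.
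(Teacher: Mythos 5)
Your proof is correct, and its core mechanism is the same as the paper's: from $C_L(M)+M=L$ and $C_L(M)\cap M=Z(M)$ one gets $\dim\bigl(C_L(M)/Z(M)\bigr)=\dim(L/M)=1$, which forces $C_L(M)$ to be abelian and hence central. Your version is in fact a bit cleaner than the paper's, since you dichotomize directly on $C_L(M)\subseteq M$ instead of splitting into the cases $M$ abelian / $M$ nonabelian, and you verify $C_L(M)\subseteq Z(L)$ by a direct computation rather than via the central-product identity $Z(L)=Z(C_L(M))+Z(M)$.
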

\begin{proof}
We consider two cases:
1) Let $M$ be an abelian subalgebra of $L$. If  $C_L(M)=M$, then $C_L(M)=Z(M)=M$. Now assume that $C_L(M)=L$.  If $Z(L)=L$, then $C_L(M)=L=Z(L)$ and $L=C_L(M)+M$, as a desired.
Now we show that $Z(L)=M$ does not hold. If  $Z(L)=M$, then $dim\ \frac {L}{M}=dim\ \frac{L}{Z(L)}=1$, which contradicts with the fact that $L$ is a nonabelian Lie algebra.

2) Let $M$ be a  nonabelian subalgebra of $L$. 
If $  C_L(M)\neq Z(M)$, then  $C_L(M)\nleq M$ . Since $L=C_L(M)+M$, so $L$ is a central product of  $ M $ and $C_L(M)$. Therefore,
\[Z(L)=Z(C_L(M)) +Z(M) \]
and
\[\frac{L}{M}=\frac{C_L(M)+M}{M}\simeq \frac{C_L(M)}{M \cap C_L(M)}= \frac{C_L(M)}{Z(M)}. \]

On the other hand, $  Z(M)$ is an ideal of $ Z(C_L(M)) $ . Hence $C_L(M)$ is abelian. Therefore,  $Z(C_L(M))=C_L(M)$  and 
\[ Z(L)=Z(C_L(M))+Z(M)=C_L(M). \]
\end{proof} 
In the proof of Lemma \ref{l2}, we obtain that  $C_L(M)\nleq M$, which implies that $Z(L) \nleq M$. 
\begin{corollary} \label{7}
Let $L$ be a finite-dimensional nilpotent Lie algebra and let $M$ be a maximal subalgebra of $L$. If $Z(L) \nleq M$, then $C_L(M) =Z(L)$, and if $Z(L) \leqslant M$, then $C_L(M) =Z(M)$.
\end{corollary}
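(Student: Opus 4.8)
The plan is to read off the corollary directly from the dichotomy in Lemma~\ref{l2} together with the remark that immediately precedes this statement. Two elementary containments will drive the whole argument: since every element of $Z(L)$ commutes with all of $L$ and in particular with $M$, we always have $Z(L) \leqslant C_L(M)$; and by definition $Z(M) \leqslant M$.

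For the first implication I would assume $Z(L) \nleq M$ and invoke Lemma~\ref{l2}, which leaves exactly two possibilities: either $C_L(M) = Z(M)$, or $C_L(M) = Z(L)$ supplements $M$ in $L$. I would eliminate the first: if $C_L(M) = Z(M)$, then $C_L(M) \leqslant M$, and combining this with $Z(L) \leqslant C_L(M)$ forces $Z(L) \leqslant M$, contradicting the hypothesis. Hence the second alternative holds and $C_L(M) = Z(L)$, as claimed.

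For the second implication I would assume $Z(L) \leqslant M$ and again split into the two branches. This time I would eliminate the supplement branch using the remark preceding the corollary: whenever $C_L(M) = Z(L)$ is a supplement to $M$, one has $C_L(M) \nleq M$ and therefore $Z(L) \nleq M$, which contradicts the hypothesis. The only surviving branch gives $C_L(M) = Z(M)$.

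I do not anticipate a genuine obstacle, since the corollary is essentially a logical repackaging of Lemma~\ref{l2}: all the substantive content lives in the lemma and its accompanying remark. The one point requiring care is to state explicitly the pair of containments $Z(L) \leqslant C_L(M)$ and $Z(M) \leqslant M$, as these are precisely what let the position of $Z(L)$ relative to $M$ select a single branch of the dichotomy in each case.
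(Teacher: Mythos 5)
Your proof is correct, and it differs from the paper's in a small but real way: you treat the statement as a pure logical consequence of Lemma~\ref{l2}, using the two containments $Z(L)\leqslant C_L(M)$ and $Z(M)\leqslant M$ to let the position of $Z(L)$ relative to $M$ eliminate one branch of the dichotomy in each case. The paper argues in that spirit only for the first implication (and even there it writes ``similar to the proof of Lemma~\ref{l2}'', i.e.\ it re-runs the lemma's case analysis rather than citing its conclusion); for the second implication, with $Z(L)\leqslant M$, the paper gives a direct, self-contained argument: since $M\cap C_L(M)=Z(M)$ always holds, it suffices to show $C_L(M)\leqslant M$, and if some $x\in C_L(M)\setminus M$ existed, maximality would force $L=M+\langle x\rangle$, so $x$ would centralize all of $L$ and hence lie in $Z(L)\leqslant M$, a contradiction. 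Your version is the cleaner ``corollary'' in the strict sense --- it depends only on the statement of the lemma and is insulated from the details of its proof --- while the paper's element-level argument for the second case is independent of the lemma and in effect re-proves the relevant piece of it. Both routes are valid, and the point you flag as needing care (stating $Z(L)\leqslant C_L(M)$ and $Z(M)\leqslant M$ explicitly) is exactly what makes the branch selection go through.
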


\begin{proof}
If $ Z(L) \nleqslant M $, then similar to the proof of Lemma \ref{l2}, we have $ C_L(M) = Z(L) $. Now assume that $ Z(L) \leqslant M $. Since $ M \cap C_L(M) =Z(M) $, it is sufficient to show that $ C_L(M) \leqslant M $. We assume on the contrary  that there exists $ x \in C_L(M) \setminus M $.  Thus $ L= M + \langle x \rangle  $, and for  each element $ l = kx+m $ of  $ L $, where $ k \in \mathbb{Z} $ and $ m \in M $, we have
\[  [x , l] =[x,kx+m]=0 .\]
that is,   $ x \in Z(L) \leqslant M $, which is a contradiction.
\end{proof}

\begin{lemma} \label{8}Let $L$  be a finite-dimensional nilpotent Lie algebra and let $M$ be a maximal subalgebra of $L$. If $ Z(M) \nleqslant Z(L) $, then $ C_L(Z(M))=M $. 

\end{lemma}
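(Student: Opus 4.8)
The plan is to exploit the maximality of $M$ together with a single containment, turning the statement into a clean dichotomy. The key observation is that $C_L(Z(M))$ is itself a subalgebra of $L$ containing $M$, so by maximality it can only be $M$ or $L$; the hypothesis $Z(M)\nleqslant Z(L)$ is then exactly what is needed to exclude the second possibility.

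First I would record that $C_L(Z(M))$ is a subalgebra. For $x,y\in C_L(Z(M))$ and any $z\in Z(M)$, the Jacobi identity gives $[[x,y],z]=[x,[y,z]]-[y,[x,z]]=0$, so $[x,y]\in C_L(Z(M))$ as well. Next I would note the containment $M\subseteq C_L(Z(M))$: by the very definition of the center of $M$, every element of $M$ commutes with every element of $Z(M)$, so $[M,Z(M)]=0$ and hence $M\subseteq C_L(Z(M))$.

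Now, since $M$ is a maximal subalgebra and $C_L(Z(M))$ is a subalgebra containing it, there are only two cases: either $C_L(Z(M))=M$ or $C_L(Z(M))=L$. In the latter case every element of $L$ centralizes $Z(M)$, i.e. $[L,Z(M)]=0$, which says precisely that $Z(M)\subseteq Z(L)$. This contradicts the hypothesis $Z(M)\nleqslant Z(L)$, so the case $C_L(Z(M))=L$ cannot occur. Therefore $C_L(Z(M))=M$, which is the claim.

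I do not anticipate a genuine obstacle here: the entire argument rests on the maximality dichotomy and on reading $C_L(Z(M))=L$ as the assertion $Z(M)\subseteq Z(L)$. The only point deserving a line of care is verifying that $C_L(Z(M))$ is closed under the bracket, which the Jacobi computation above settles; everything else is immediate from the definitions.
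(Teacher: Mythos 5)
Your proof is correct: $C_L(Z(M))$ is a subalgebra (by the Jacobi identity) containing $M$, so maximality forces it to be $M$ or $L$, and the latter would give $Z(M)\subseteq Z(L)$, contradicting the hypothesis. The paper dismisses this lemma with ``It is straightforward,'' and your argument is exactly the natural one that fills in that omission.
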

\begin{proof}
It is straightforward.
\end{proof}
\begin{lemma} \label{9}
Let $L$ be a Frattinian nonabelian nilpotent Lie algebra of dimension finite such that every maximal subalgebra $M$  of $L$ containing $ Z(L) $ satisfies $ Z(M)\leqslant Z(L) + L^2 $. Then the following properties hold:
\\
1) $ M= C_L (Z(M)) $,\\
2) $ C_L(Z(L^2)) = Z(L) + L^2 $,\\
3) If $ F $ is the smallest supplement subalgebra of   $ Z(L) $ in $L  $, then $ F $ is Frattinian, 
 $ Z(F) \leqslant F^2 $, $ C_L(F)=Z(L) $, and  $ C_L(Z(F^2))=F^2 $.
\end{lemma}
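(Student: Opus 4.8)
The plan is to establish the three items in turn, using Lemma~\ref{8}, the earlier items of the lemma once proved, and the standard fact that in a finite-dimensional nilpotent Lie algebra every maximal subalgebra is an ideal of codimension one containing $\Phi(L)=L^2$. For (1): if $M$ is maximal with $Z(L)\le M$, then every central element of $L$ lying in $M$ is central in $M$, so $Z(L)\subseteq Z(M)$; Frattinianity gives $Z(M)\neq Z(L)$, whence $Z(L)\subsetneq Z(M)$ and in particular $Z(M)\nleq Z(L)$. Lemma~\ref{8} then yields $C_L(Z(M))=M$.

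For (2), the inclusion $Z(L)+L^2\subseteq C_L(Z(L^2))$ is immediate, since $Z(L)$ centralizes $L$ and $[L^2,Z(L^2)]=0$ by definition of the center of $L^2$. For the reverse I would prove $C_L(Z(L^2))\subseteq M$ for every maximal $M$ with $Z(L)\le M$ and then intersect. Fix such an $M$. By (1) and the hypothesis, $Z(L)\subsetneq Z(M)\subseteq Z(L)+L^2$, so picking $w\in Z(M)\setminus Z(L)$ and writing $w=z+\ell$ with $z\in Z(L)$, $\ell\in L^2$ gives $\ell\in Z(M)\setminus Z(L)$ with $\ell\in L^2$; since $L^2\le M$, we get $[\ell,L^2]\subseteq[\ell,M]=0$, so $\ell\in Z(L^2)$. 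If some $a\in C_L(Z(L^2))$ lay outside $M$, then $L=M+\langle a\rangle$, and from $[a,\ell]=0$ (as $\ell\in Z(L^2)$) and $[M,\ell]=0$ (as $\ell\in Z(M)$) we would deduce $[L,\ell]=0$, i.e. $\ell\in Z(L)$, a contradiction; hence $C_L(Z(L^2))\le M$. Because $L$ is nonabelian one has $Z(L)+L^2\neq L$, so maximal subalgebras containing $Z(L)$ exist; intersecting over all of them gives $C_L(Z(L^2))\subseteq\bigcap_{Z(L)\le M}M=Z(L)+L^2$, the last equality holding because in $L/L^2$ the hyperplanes through $(Z(L)+L^2)/L^2$ meet exactly in that subspace. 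Together with the easy inclusion this is equality.

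For (3), from $L=F+Z(L)$ one reads off $L^2=[F,F]=F^2$; any $x\in C_L(F)$ then centralizes $F+Z(L)=L$, giving $C_L(F)=Z(L)$, and similarly any $f\in Z(F)$ centralizes $L$, so $Z(F)\subseteq Z(L)$ and $Z(F)=Z(L)\cap F$. Minimality forces $Z(F)\le F^2$: were there $f\in Z(F)\setminus F^2$, choose a subalgebra $F_0\ge F^2$ with $F=F_0\oplus\langle f\rangle$; since $f\in Z(F)\le Z(L)$ we get $L=F_0+Z(L)$, a strictly smaller supplement, a contradiction. To see $F$ is Frattinian, lift each maximal subalgebra $M_F$ of $F$ to $M:=M_F+Z(L)$; using $Z(F)\le F^2\le M_F$ one checks $F\cap M=M_F$, so $M$ is maximal in $L$, contains $Z(L)$, and $Z(M)=Z(M_F)+Z(L)$. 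Frattinianity of $L$ gives $Z(M)\supsetneq Z(L)$, hence $Z(M_F)\nleq Z(L)$, while $Z(F)\le Z(L)$, so $Z(M_F)\neq Z(F)$. The same lifting, intersected with $F$ and simplified by the modular law together with $Z(F)\le F^2$, shows every maximal subalgebra of $F$ has center inside $F^2$; thus $F$ itself satisfies the hypotheses of (2), and applying (2) to $F$ yields $C_F(Z(F^2))=Z(F)+F^2=F^2$.

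The principal obstacle is the reverse inclusion in (2)—the step that produces, for each $M\ge Z(L)$, the element $\ell\in Z(L^2)\cap Z(M)\setminus Z(L)$ and forces $C_L(Z(L^2))\le M$—and, in (3), the verification that $F$ inherits the standing hypotheses so that (2) applies to $F$. I would also flag a consistency point worth checking carefully: since $F^2=L^2$, part (2) computes $C_L(Z(F^2))=Z(L)+L^2$, so the displayed identity $C_L(Z(F^2))=F^2$ is equivalent to $Z(L)\le L^2$ (equivalently $F=L$); the intrinsic identity that always holds is $C_F(Z(F^2))=F^2$, and reconciling it with the literal $C_L$-statement is exactly where the minimality of $F$ must be tied to $Z(L)\le L^2$.
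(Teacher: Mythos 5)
Your proof is correct and follows essentially the same route as the paper: Lemma \ref{8} for part (1), intersecting the maximal subalgebras containing $Z(L)$ (each of which contains $Z(L)+L^2$ and has center meeting $Z(L^2)$ nontrivially modulo $Z(L)$) for part (2), and the minimal-supplement properties ($\Phi(F)=F^2$, $Z(F)=F\cap Z(L)$, and lifting a maximal subalgebra $X$ of $F$ to $X+Z(L)$) for part (3). Your consistency flag is also well taken: the paper's own argument in (3) derives its contradiction only for $a\in C_F(Z(F^2))$ and hence actually proves $C_F(Z(F^2))=F^2$, so the literal identity $C_L(Z(F^2))=F^2$ in the statement is, by part (2), equivalent to $Z(L)\leqslant L^2$ (i.e.\ $F=L$), exactly as you observed.
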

\begin{proof}
1)It is well known that $ Z(L) \lvertneqq Z(M) $. Since $ Z(M) \nleqslant Z(L) $, Lemma \ref{8} implies that $M=C_L(Z(M))  $.
\\
2) Let $ J $ be the subalgebra generated by the union of center of these maximal subalgeras of $ L $. Then $ J \subseteq Z(L) + Z(L^2) $ and
\begin{align*}
C_L(J) \leqslant \cap _{Z(M) \subseteq J} C_L(Z(M))&= Z(L) + L^2 \\
&=C_L(Z(L^2)).
\end{align*}
\\
3) It is clear that $ F $ is an ideal of $ L $. Since $ F $ is the smallest supplement subalgebra of $ Z(L) $ in $L$, from \cite{14} we have $ F $ is a nilpotent Lie algebra with $ \Phi (F) =F^2 $. Therefore $ Z(F) \subseteq F^2 $ and since $ L=F+ Z(L) $, we have $ C_L(F)=Z(L) $.

Suppose that $X$ is a maximal subalgebra $ F $. Then $ M :=Z(L) + X $ is a maximal subalgebra of $ L $, since $ Z(F)= X \cap Z(L) $. The properties of $ F $ imply that $L \neq Z(L) + X $ and thus
\[ dim\ \frac{L}{M} = dim\ \frac{F+Z(L)}{X+Z(L)} = dim\ \frac{F}{Z(F)} - dim\ \frac{X}{X \cap Z(L) } = dim\ \frac{F}{X}  =1,\]
that is, $M$ is a maximal subalgebra of $L$.

Now we show that $F$ is a  Frattinian subalgebra of $L$. If it is not, then  $ Z(X)=Z(F) $ and $ M $ the central product of  $Z(L) $ and $ X $. Therefore,
\[ Z(M)=Z(L)+Z(X) = Z(L)+Z(F) = Z(L), \]
which contradicts with $L$ is  Frattinian.

We clear that $ F^2=\Phi (F) \leqslant C_F(Z(F^2)) $ and that $ L^2 =[Z(L) +F , Z(L)+F]=F^2 $. If $ C_F(Z(F^2)) \nleqslant F^2 =\Phi(F) $, then there exists a maximal subalgebra  $N  $ of $ F $ such that  $C_F(Z(F^2)) \nleqslant N  $. Hence, there exists an element $ a \in C_F(Z(F^2)) \setminus N $ such that $ F = \langle a \rangle + N $, but $ Z(L) + N $ is a maximal subalgebra of $ L $, so $ a \notin Z(L) + N$.

The above discussion implies that
\begin{align*}
a \in C_F(Z(F^2)) \leqslant C_L(Z(F^2)) &=Z(L) + L^2 \\
&=Z(L) +F^2 \\
&\leqslant Z(L) +N,
\end{align*}
which is a contradiction.
\end{proof}

\begin{lemma} \label{10}
Let $L$ be a Frattinian nonabelian nilpotent Lie algebra of dimension finite such that there exists a maximal subalgebra $M$  of $L$ containing $ Z(L) $ such that $ Z(M)\nleqslant Z(L) + L^2 $. Then the following properties hold:
\\
1) There exists a maximal subalgebra $ N $ of  $  L$ including $  Z(L)$ such that $ Z(M) \nleq N $.
\\
2) $ Z(N) \cap M = Z(M) \cap N =Z(L) $. 

\end{lemma}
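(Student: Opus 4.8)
The plan is to handle the two assertions in turn, reducing everything to linear algebra modulo the derived algebra together with the centralizer facts already established.

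For (1), I would pass to $\bar L = L/L^2$. Since $L$ is nilpotent we have $\Phi(L)=L^2$, so every maximal subalgebra of $L$ contains $L^2$, and the maximal subalgebras of $L$ containing $Z(L)$ are precisely the preimages of the hyperplanes of the abelian algebra $\bar L$ that contain $\bar Z := (Z(L)+L^2)/L^2$. By hypothesis there is some $z\in Z(M)\setminus(Z(L)+L^2)$, so $\bar z\neq 0$ in $\bar L/\bar Z$. Choosing a hyperplane of $\bar L$ that contains $\bar Z$ but avoids $\bar z$ and taking its preimage $N$ yields a subalgebra with $L^2\subseteq N$, hence a genuine subalgebra of codimension one, i.e.\ a maximal subalgebra, with $Z(L)\subseteq Z(L)+L^2\subseteq N$ and $z\notin N$. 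Thus $Z(M)\nleqslant N$, which proves (1).

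For (2), first observe that $Z(M)\subseteq M$ while $Z(M)\nleqslant N$ forces $M\neq N$, indeed $M\nleqslant N$; since $N$ is maximal this gives $L=M+N$. From $L=M+N$ the ``diagonal'' identity is immediate: any element centralizing both $M$ and $N$ centralizes $M+N=L$, so $Z(M)\cap Z(N)\subseteq Z(L)$, while $Z(L)\subseteq M$ and $Z(L)\subseteq N$ give the reverse inclusion; hence $Z(M)\cap Z(N)=Z(L)$. The same two containments yield $Z(L)\subseteq Z(M)\cap N$ and $Z(L)\subseteq Z(N)\cap M$, so it remains only to establish the reverse inclusions $Z(M)\cap N\subseteq Z(L)$ and $Z(N)\cap M\subseteq Z(L)$.

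I expect this last step to be the main obstacle. The natural route is: given $w\in Z(M)\cap N$ we have $M\subseteq C_L(w)$, so by maximality either $C_L(w)=L$, i.e.\ $w\in Z(L)$ as wanted, or $C_L(w)=M$. In the second case $[L,w]$ is one–dimensional and $C_N(w)=M\cap N$ has codimension one in $N$, so $w\notin Z(N)$; the whole difficulty is to rule out this alternative. This is where nilpotency must genuinely be used: the line $[L,w]$ lies in $L^2\subseteq M$ and should be forced into $Z(L)$ by climbing the descending central series, with the Frattinian hypothesis and a sufficiently careful choice of the hyperplane $N$ in part (1) doing the real work, since a careless $N$ can leave $Z(M)\cap N$ strictly larger than $Z(L)$. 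The symmetric inclusion $Z(N)\cap M\subseteq Z(L)$ is treated the same way with the roles of $M$ and $N$ exchanged, using Lemma~\ref{8} (which gives $C_L(Z(N))=N$, since $Z(N)\nleqslant Z(L)$ by the Frattinian property) to keep the centralizer bookkeeping under control.
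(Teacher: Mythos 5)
Your part (1) is correct and is essentially the paper's own argument: both amount to choosing a hyperplane of $L/L^2$ (the paper works modulo $Z(L)+L^2$) that contains the image of $Z(L)+L^2$ but not that of $Z(M)$; its preimage is a maximal subalgebra $N$ with $Z(L)\leqslant N$ and $Z(M)\nleqslant N$.

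Part (2), however, has a genuine gap that you yourself flag: you reduce everything to the inclusions $Z(M)\cap N\subseteq Z(L)$ and $Z(N)\cap M\subseteq Z(L)$, note that for $w\in Z(M)\cap N$ either $C_L(w)=L$ or $C_L(w)=M$, and then state that ``the whole difficulty is to rule out'' the second alternative --- but you never rule it out. Your two proposed escape routes are also not the right ones: no ``sufficiently careful choice of $N$'' is needed (any $N$ produced in part (1) works), and one does not climb the descending central series. The missing ingredient is the centralizer duality already established in the paper. Since $Z(L)\leqslant M,N$ and $L$ is Frattinian, $Z(L)\lneqq Z(M)$ and $Z(L)\lneqq Z(N)$, so Lemma \ref{8} gives $C_L(Z(M))=M$ and $C_L(Z(N))=N$, while Corollary \ref{7} gives $C_L(M)=Z(M)$ and $C_L(N)=Z(N)$. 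Now if $Z(M)\cap N\nleqslant Z(L)$, then $C_L(Z(M)\cap N)$ is a proper subalgebra of $L$ containing $C_L(Z(M))=M$, hence equals $M$ by maximality of $M$; since $Z(M)\cap N\subseteq N$, this forces $Z(N)=C_L(N)\subseteq C_L(Z(M)\cap N)=M$, and then $Z(N)\subseteq M$ yields $Z(M)=C_L(M)\subseteq C_L(Z(N))=N$, contradicting part (1). The inclusion $Z(N)\cap M\subseteq Z(L)$ follows by the symmetric chain (even more directly: $C_L(Z(N)\cap M)=N$ would give $Z(M)=C_L(M)\subseteq C_L(Z(N)\cap M)=N$ at once). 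Without this chain your proof of (2) is incomplete.
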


\begin{proof}
Since $L$ is a  Frattinian  Lie algebra and  $ Z(L) \lneqq M $, we have $ Z(L) +L^2 \lneqq Z(M) + L^2 \lneqq L $. Let $ \frac{T}{Z(L) +L^2} $ be a supplement of $ \frac{Z(M) + L^2}{Z(L) +L^2} $ in $ \frac{L}{Z(L)+L^2} $ and let  $\frac{N}{Z(L)+L^2}  $ be a maximal subalgebra of $\frac{L}{Z(L) + L^2}  $ including $ \frac{T}{Z(L)+ L^2} $ and not  $ \frac{Z(M) +L^2}{Z(L)+L^2} $. Then

\begin{align*}
Z(L) &\leqslant Z(L) + L^2 \\
&\nleqslant Z(M) +L^2 \\
&\nleqslant N
\end{align*}
and $ dim\ \frac{L}{N}=1 $.

Hence $ N $ is a maximal subalgebra of $L  $ containing $ Z(L) + L^2 $ and not $ Z(M) + L^2 $. Since  $L^2 \leqslant N  $, we have $ Z(M) \nleqslant N $.

For the second case, we know that $ N $ is a maximal subalgebra of  $ L $ containing $ Z(L) $, which implies that $ Z(L) \lneqq Z(N) $. Since  $ L $ is Frattinian, we have $ Z(L)\nleqslant Z(N) $. Lemma  \ref{8}  implies that $ C_L(Z(N))=N $ and similarly $ C_L(Z(M))= M $.

Now we prove that $ Z(M) \cap N = Z(L) $. If it is not, then $ C_L(Z(M) \cap N)=M $. Corollary \ref{7} implies that $ C_L(N)=N $. Therefore
\[ M= C_L(Z(M)\cap N) \geqslant C_L(N) =Z(L). \]
Thus $ N= C_L(Z(N)) \geqslant C_L(M) =Z(M) $,  a contradiction.
Similarly one can prove that  $ Z(N) \cap M = Z(L) $.
\end{proof}
\section{Main result}
In this section, using the lemmas of section 2, we state and prove the main result, which shows that every Frattinian nilpotent Lie algebra has a central  decomposition of its ideals.
\begin{theorem}
Let $L$ be a Frattinian nonabelian nilpotent Lie algebra of dimension finite. Then one of the following conditions holds:

1) $L$ is a central product of nonabelian nilpotent Lie algebras of dimension $ 2+ dim\ Z(L) $ that their center is equal to the center of $ L $.

2)$ L=E \dotplus F $ is a central product   Frattinian  Lie algebras   $ E $ and  $ F $ with $ C_L(Z(F^2))=F^2 $ and $ E=C_L(F) $ such that $ E^2 \subseteq Z(L) $.

Moreover, in the second case,  $ E=Z(L) $ or $ E $ has a central product as the same as the first case.

\end{theorem}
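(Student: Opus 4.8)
The plan is to argue by induction on $\dim L$, splitting into the two regimes isolated by Lemmas \ref{9} and \ref{10}. Concretely, I would first ask whether there is a maximal subalgebra $M$ of $L$ with $Z(L)\leqslant M$ and $Z(M)\nleqslant Z(L)+L^2$. If no such $M$ exists, the hypothesis of Lemma \ref{9} holds; if some such $M$ exists, the hypothesis of Lemma \ref{10} holds. These two alternatives are designed to produce conclusions (2) and (1), respectively.

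In the Lemma \ref{10} regime I would peel off one Heisenberg-like central factor. Lemma \ref{10} supplies a second maximal subalgebra $N\geqslant Z(L)$ with $Z(M)\cap N=Z(N)\cap M=Z(L)$, and via Lemma \ref{8} we have $C_L(Z(M))=M$ and $C_L(Z(N))=N$. Choosing $x\in Z(M)\setminus N$ and $y\in Z(N)\setminus M$, the key computation is to show that $z:=[x,y]$ is a nonzero element of $Z(L)$. Nonvanishing follows because $C_L(x)=M$ and $C_L(y)=N$ are proper; centrality I would obtain from the Jacobi identity together with the fact that in a nilpotent Lie algebra every maximal subalgebra is an ideal, so that $z\in M\cap N$, whence $z\in M$ forces $[x,z]=0$ and $z\in N$ forces $[y,z]=0$. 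Setting $B:=Z(L)+\langle x,y\rangle$ and $B':=M\cap N$, I would then verify $L=B\dotplus B'$: both are ideals, $[B,B']=0$ since $B'\leqslant C_L(x)\cap C_L(y)$, the sum is all of $L$ for dimension reasons, and $B\cap B'=Z(L)$. Here $B$ is nonabelian of dimension $2+\dim Z(L)$ with $Z(B)=Z(L)$, which is exactly a factor of the shape required in (1).

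To iterate I must check that $B'$ is again Frattinian with $Z(B')=Z(L)$; granting this, the induction hypothesis applies to the lower-dimensional $B'$. Frattinian-ness of $B'$ I would establish by transporting maximal subalgebras: if $X$ is maximal in $B'$ and contains $Z(L)$, then $B+X$ is maximal in $L$ with $Z(B+X)=Z(X)$, so $Z(X)\neq Z(L)$ because $L$ is Frattinian; if $X$ does not contain $Z(L)$, any $w\in Z(L)\setminus X$ already witnesses $Z(X)\subsetneq Z(L)$. Feeding $B'$ into the induction, if $B'$ is of type (1) (or abelian, hence equal to $Z(L)$) then $L=B\dotplus B'$ is again of type (1); if $B'=E'\dotplus F'$ is of type (2), then $L=(B\dotplus E')\dotplus F'$ is of type (2) with $E=B\dotplus E'$, and the ``moreover'' clause applied to $B'$ shows $E$ is either $Z(L)$ or a central product as in (1).

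In the Lemma \ref{9} regime I would invoke that lemma directly: the smallest supplement $F$ of $Z(L)$ is Frattinian with $Z(F)\leqslant F^2$, $C_L(F)=Z(L)$ and $C_L(Z(F^2))=F^2$. Putting $E:=C_L(F)=Z(L)$ yields $L=E\dotplus F$ with $E^2=0\leqslant Z(L)$, which is precisely conclusion (2) in its base form $E=Z(L)$; the containment $Z(L)\leqslant F^2$, needed for $F$ genuinely to carry the shared center, is what Lemma \ref{9}(2)--(3) encode. The main obstacle, and the point I would treat most carefully, is the interface between the two regimes inside the induction: ensuring that the complementary factor $B'$ stays Frattinian, and that the centralizer data $C_L(F)=Z(L)$ and $C_L(Z(F^2))=F^2$ produced deep in the recursion still describe $L$ once the Heisenberg factors are reattached to build $E$. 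Verifying the bracket identity $[x,y]\in Z(L)$ and this bookkeeping of centralizers across the central product are where the real difficulty lies; the accompanying dimension counts are routine.
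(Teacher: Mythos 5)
Your proposal follows essentially the same route as the paper: the same dichotomy via Lemmas \ref{9} and \ref{10}, the same $(2+\dim Z(L))$-dimensional central factor (your $B=Z(L)+\langle x,y\rangle$ is the paper's $E_1=Z(M)+Z(N)$) with complementary factor $M\cap N$, the same check that the complement stays Frattinian, and the same reassembly at the end; your induction on dimension is just the paper's ``continue the process'' phrased more cleanly. The only step to tighten is the centrality of $[x,y]$: checking $[x,z]=[y,z]=0$ alone does not give $z\in Z(L)$, but since $Z(M)$ and $Z(N)$ are ideals of $L$ (being centers of ideals of a Lie algebra), one gets $[x,y]\in Z(M)\cap Z(N)=Z(L)$ at once.
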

\begin{proof}
If for every maximal subalgebra $ M $ of  $ L $ containing $ Z(L) $ we have  $ Z(M) \nleqslant Z(L) + L^2 $, then Lemma \ref{9} completes the proof.
 
 We assume that there exists a maximal subalgebra $ M $ of  $ L $ containing  $ Z(L) $ such that $ Z(M) \nleqslant Z(L) +L^2 $. Lemma \ref{10} implies that there exists a maximal subalgebra  $ N $ of $ L $  containing  $ Z(L) $  such that
\[
Z(M) \cap N=Z(N)\cap M=Z(L).
\]Therefore,
\[
Z(M) \cap Z(N) = Z(L).
\]
Now we set $ E_1 :=Z(M)+Z(N) $. Hence $ E_1 $ is a nonabelian ideal of  $ L $ with dimension $ dim\ Z(L)+2 $ that satisfies  $ Z(E_1)=Z(L) $. If we set $ L_1:=C_L(E_1) $, then $ L_1 = M\cap N $ and  $ L $ is the central product of $ E_1 $ and  $ L_1 $ with $ Z(L_1)=Z(L) $.

We claim that $ L_1 $ is a Frattinian  Lie algebra. Let  $ M_1 $ be a maximal subalgebra of $ L_1 $. If $ M_1 $ does not contain $ Z(L) $, then $ Z(M_1)\neq Z(L_1) = Z(L) $, as a desired. Suppose that $ M_1 $ contains $ Z(L) $. Then $ E_1 + M_1 $ is a maximal subalgebra of $ L $ such that
\[ Z(M_1) = C_{L_1}(M_1)= C_L(E_1+ M_1) = Z(E_1 +M_1). \]
Since $ L $  is Frattinian, $ Z(L) \neq Z(E_1 + M_1) $, that is, $ Z(M_1) \neq Z(L_1) = Z(L) $ and $ L_1 $ is  Frattinian.

Continuing this process, we assume that  $  L_1$ has a maximal subalgebra, say   $ M_1 $, that contains $ Z(L) $ and $ Z(M_1) \nleq Z(L_1)+L^2_1 $. Therefore, there exist ideals  $ L_2 $ and  $ E_2 $ such that $ L_1 $ is the central product of them and 
\[
Z(L_2)=Z(E_2)=Z(L_1)=Z(E_1)=Z(L).
\]
Moreover,  $ L_2=C_{L_1}(E_2)=C_L(E_1+E_2) $, $ dim\ E_2 = 2+dim\ Z(L) $, and $ E_2$ is nonabelian. Thus $ L=E_1+E_2+L_2 $.

This process will be stop after $n$ steps, so we have $ L=E_1+E_2+ \cdots + E_n+L_n $, where $ L_n=C_{L_{n-1}}({E_n})=C_L(E_1+ \cdots +E_n) $,   $ E_i $ are nonabelian ideals  of dimension $ 2+dim\ Z(L) $ and $ L_n $ does not have a maximal subalgebra such as $ M_n $  containing  $ Z(L) $ such that $ Z(M_n) \nleq Z(L_n)+L^2 $.

In other words, for every maximal subalgebra  of $  L_n$, one of the following cases holds: 1)  there is no maximal subalgebra of $  L_n$ containing $ Z(L_n)=Z(L) $, or 2) for every maximal subalgebra $  M_n$ of $ L_n $  containing $ Z(L_n)=Z(L) $, we have  $ Z(M_1) \leqslant Z(L_n) +L^2_n $. In the first case, we have $ L_n =Z(L_n)=Z(L) $. Since  $ Z(L) \leqslant E_i $, so \[
L=E_1 + E_2 + \cdots + E_n+ Z(L) = E_1 + \cdots + E_n,
\]
as a desired.

For the second case, Lemma \ref{9} implies that if $ F $ is the smallest supplement subalgebra of $ Z(L_n)=Z(L) $ in $ L $, then  $ Z(F)\subseteq F^2 $, $ C_{L_n}(F) =Z(L_n) $, and $ C_L(Z(F^2)) = F^2 $.
Now set $ E := E_1 + E_2 +\cdots + E_n $. Then $ C_L(F) = E $ and   $ L $ is the central product of $ E $ and $ F $.

We know that $ \frac{E}{Z(L)} $ is an abelian Lie algebra of dimension $ 2n $. Now we show that  $ F $ is Frattinian.  We assume on the contrary that $F$ is not Frattinian. If $ X $ is a maximal subalgebra of $ F $ with $ Z(X)=Z(F)$, then $ X+Z(L) $ is a maximal subalgebra of  $ L_n $  and
 \begin{align*}
 Z(X+Z(L)) &=Z(F)+Z(L) \\
 &= Z(L) \\
 &= Z(L_n),
\end{align*}  
which contradicts with the fact that  $ L_n $ is Frattinian.

Similarly  $ E $ is Frattinian.  We assume on the contrary that $E$ is not Frattinian. If $ X $ is a maximal subalgebra of $ E $ with $ Z(X)=Z(E)=Z(L) $, then $ X+F $ is a maximal subalgebra of  $ L $  and
\begin{align*}
Z(X+F)=Z(X)+Z(F) &=Z(L)+Z(F) \\
&=Z(L),
\end{align*}
which contradicts with the fact that  $ L $ is Frattinian.
\end{proof}

 \begin{remark}
 We note that the central decomposition of  $ L=E+F $ is not unique in general, but since $ Z(L)+F^2=Z(L)+L^2 $, the centralizer $ C_L(F^2)=C_L(L^2) $ is independent of the choice of $ F $. Moreover, $ C_L(F^2) = E+Z(F^2) \supseteq E+L^2 $.
 
 \end{remark}

\vskip 0.9 true cm

{\tiny $^{1,2}$Department of Mathematics, Mashhad Branch, Islamic Azad University, Mashhad, Iran.
\vskip 0.2 true cm		
	{\it E-mail address:} me.kianmehr@yahoo.com 
\vskip 0.1 true cm	
	{\it E-mail address:} saeedi@mshdiau.ac.ir


\begin{thebibliography}{99}
\bibitem{2}
Cantuba, R. R. S. \textit{Lie polynomials in $q$-deformed Heisenberg algebras.} J. Algebra 522 (2019), 101--123.
\bibitem{1}
Cantuba, R. R. S.; Merciales, M. A. C. \textit{An extension of a $q$-deformed Heisenberg algebra and its Lie polynomials}. Expo. Math. 39 (2021), no. 1, 1--24. 






\bibitem{12}
Johari, F.; Parvizi, M.; Niroomand, P. \textit{Capability and Schur multiplier of a pair of Lie algebras.} J. Geom. Phys. 114 (2017), 184--196.
\bibitem{13}
Johari, F.;  Shamsaki, A.; Niroomand, P. \textit{The structure of nilpotent Lie algebras of class two with derived subalgebra of dimension two}. arXiV : 2003. 12066 v1 [math . KA] 26 Mar. 2020.

\bibitem{4}
Liu, D.; Pei, Y.; Zhu, L. \textit{Lie bialgebra structures on the twisted Heisenberg-Virasoro algebra}. J. Algebra 359 (2012), 35--48. 
\bibitem{7}
Lopes, S. A. \textit{Non-Noetherian generalized Heisenberg algebras.} J. Algebra Appl. 16 (2017), no. 4, 1750064, 13 pp. 
\bibitem{8}
Lü, R.; Mazorchuk, V.; Zhao, K. \textit{Simple weight modules over weak generalized Weyl algebras.} J. Pure Appl. Algebra 219 (2015), no. 8, 3427--3444.

\bibitem{9}
Lü, R.i; Zhao, K. \textit{Finite-dimensional simple modules over generalized Heisenberg algebras.} Linear Algebra Appl. 475 (2015), 276--291.
\bibitem{14}
Marshall, E. I.: The Frattini subalgebra of a Lie algebra. Journal London Math. Soc., 42, 416–422 (1967)

\bibitem{6}
Niroomand, P.; Johari, F. \textit{The structure, capability and the Schur multiplier of generalized Heisenberg Lie algebras.} J. Algebra 505 (2018), 482--489. 

\bibitem{11}
Niroomand, P.; Johari, F.; Parvizi, M. \textit{On the capability and Schur multiplier of nilpotent Lie algebra of class two.} Proc. Amer. Math. Soc. 144 (2016), no. 10, 4157--4168. 
\bibitem{5}
Niroomand, P.; Parvizi, M. \textit{2-capability and 2-nilpotent multiplier of finite dimensional nilpotent Lie algebras.} J. Geom. Phys. 121 (2017), 180--185. 
\bibitem{10}
Schmid, Peter. \textit{Frattinian $p$-groups. }Geom. Dedicata 36 (1990), no. 2-3, 359--364. 
\bibitem{3}
Souza, J. A. \textit{Sufficient conditions for dispersiveness of invariant control affine systems on the Heisenberg group.} Systems Control Lett. 124 (2019), 68--74.

\end{thebibliography}
\end{document}